\definecolor{myyellow}{HTML}{EBE050} % 橙色
\definecolor{mypurple}{HTML}{9F86E7} % 紫色
\definecolor{myred}{HTML}{EB596E} % 红色
\definecolor{mygreen}{HTML}{75F02B} % 绿色
\definecolor{myblue}{HTML}{0F88EB} % 蓝色
\definecolor{mybrown}{HTML}{A52A2A} % 棕色
\definecolor{mygray}{HTML}{4b4453} % 灰色
\newtheorem{theorem}{Theorem}[section]
\newtheorem{lemma}[theorem]{Lemma}
\theoremstyle{definition}
\newtheorem{definition}[theorem]{Definition}
\theoremstyle{remark}
\numberwithin{equation}{section}
\newcommand\nutwid{\overset {\text{\lower 3pt\hbox{$\sim$}}}\nu}
\newcommand{\s}{\text{s}}
\newcommand\omycite[1]{}
\newcommand{\beqs}{\begin{equation*}}
\newcommand{\eeqs}{\end{equation*}}
\newcommand{\beq}{\begin{equation}}
\newcommand{\eeq}{\end{equation}}
\begin{document}
%%PRELIMINARY VERSION
\title[Combinatorial proof ]{Combinatorial Proofs for Overpartitions and Two-Colored Partitions II}

% Information for first author
\author{Dandan Chen}
\address{Department of Mathematics, Shanghai University, People's Republic of China}
\address{Newtouch Center for Mathematics of Shanghai University, Shanghai, People's Republic of China}
\email{mathcdd@shu.edu.cn}
\author{Jiahao Liu}
\address{Department of Mathematics, Shanghai University, People's Republic of China}
\email{2530476100@shu.edu.cn}

\subjclass[2010]{05A30, 33d05, 33D15, 33D45, 42C05}

\date{}

\subjclass[2010]{11P81; 05A17; 11D09.}
	
	\keywords{Two-colour partitions, Franklin’s involution}

\begin{abstract}
Andrews and El Bachraoui recently studied various two-colored integer partitions,
including those related to two-colored partitions into distinct parts with constraints and overpartitions.
Their work raised questions about the existence of combinatorial proofs for these results, which were partially addressed by the first author and Zou.
This paper  provides combinatorial proofs for the remaining results concerning two-colored partitions and overpartitions with constraints.

\end{abstract}

\maketitle

%%%%%%%%%%%%%%%%%%%%%%%%%%%%%%%%%%%%%%%%%%%%%%%%%%%%%%%%%%%%%%%%%%%%%%%%%%%
%%\input body.tex
%SECTION 1%%%%%%%%%%%%%%%%%%%%%%%%%%%%%%%%%%%%%%%%%%%%%%%%%%%%%%%%%%%%%%%%
\section{Introduction}

\begin{comment}
Throughout this paper, we let $q$ denote a complex number such that $|q|<1$.
The symbols $n$ will always denote nonnegative integers.
The $q$-shifted factorial \cite{A98} is defined as
\begin{align*}
(a;q)_0=1, \; (a;q)_n=\prod_{j=0}^{n-1}(1-aq^j), \;
(a;q)_{\infty}=\prod_{j=0}^{\infty}(1-aq^j).
\end{align*}
\end{comment}

In 2004, Corteel and Lovejoy \cite{CL04} introduced the concept of an overpartition, denoting by $\overline{p}(n)$  the number of overpartitions of $n$. An overpartition is defined as a partition of $n$ in which the first occurrence of each distinct part may optionally be overlined. From this, Hirschhorn and Sellers \cite{HS06} derived the generating function for $\overline{p_o}(n)$, the number of overpartitions of $n$ into odd parts, given by
\[
\sum_{n=0}^{\infty}\overline{p_o}(n)q^n=\frac{(-q;q^2)_{\infty}}{(q;q^2)_{\infty}},\quad \text{for $|q|<1$},
\]
where the $q$-shifted factorial \cite{A98} is defined as
\[
(a;q)_{\infty}=\prod_{j=0}^{\infty}(1-aq^j).
\]

\indent In recent research, Andrews and El Bachraoui \cite{AB24} studied two-colored partitions with specific constraints. They defined \(E(n)\) as the number of two-colored partitions of \(n\) where all parts are distinct, with the further requirement that even parts are confined to the blue color. They  further defined the following notations:

\begin{enumerate}
\item \(E_0(n)\) (resp. \(E_1(n)\)) indicates the number of such partitions of \(n\) where the count of even parts is even (resp. odd).
\item \(E_2(n)\) (resp. \(E_3(n)\)) indicates the number of such partitions of \(n\) where the total count of parts is even (resp. odd).
\end{enumerate}

In this paper, we  consider integer partitions with two colors, blue and green. A part \(\lambda_b\) (resp. \(\lambda_g\))  denotes a part \(\lambda\) occurring in blue (resp. green) color, following the order convention \(\lambda_b \geq \lambda_g\).\\
\indent For example, for $n=5$ we have $\overline{p}_o(5)=8$, counting the odd overpartitions:
\[\overline{5},\quad 5,\quad \overline{3}+\overline{1}+1, \quad \overline{3}+1+1,\quad 3+\overline{1}+1,\quad 3+1+1, \quad \overline{1}+1+1+1+1,\quad 1+1+1+1+1.\]
Moreover, we have $E(5) = 8$, which counts the two-colored partitions:
\[5_b, \quad 5_g,\quad 4_b+1_b, \quad4_b+1_g,\quad 3_b+2_b,\quad 3_g+2_b,\quad 3_b+1_b+1_g,\quad 3_g+1_b+1_g.\]
 Among these, $E_0(5)=E_1(5)=4$ and $E_2(5)=E_3(5)=4$.

\begin{theorem}\cite[Theorem 1]{AB24}\label{E}
For any nonnegative integer $n$, there holds
\begin{align*}
(a) & \quad E(n) = \overline{p_o}(n)\label{a}, \\
(b) & \quad E_0(n) = \begin{cases}
\frac{\overline{p_o}(n)}{2} + 1 & \text{if } n \text{ is a square}, \\
\frac{\overline{p_o}(n)}{2} & \text{otherwise},
\end{cases} \\
(c) & \quad E_1(n) = \begin{cases}
\frac{\overline{p_o}(n)}{2} - 1 & \text{if } n \text{ is a square}, \\
\frac{\overline{p_o}(n)}{2} & \text{otherwise},
\end{cases} \\
(d) & \quad E_2(n) = \begin{cases}
\frac{\overline{p_o}(n)}{2} + (-1)^n & \text{if } n \text{ is a square}, \\
\frac{\overline{p_o}(n)}{2} & \text{otherwise},
\end{cases} \\
(e) & \quad E_3(n) = \begin{cases}
\frac{\overline{p_o}(n)}{2} - (-1)^n & \text{if } n \text{ is a square}, \\
\frac{\overline{p_o}(n)}{2} & \text{otherwise}.
\end{cases}
\end{align*}
\end{theorem}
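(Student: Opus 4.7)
For part (a), I will construct an explicit bijection $\phi: E(n) \to \{\text{overpartitions of } n \text{ into odd parts}\}$ via Glaisher's odd/distinct bijection. Write $\lambda \in E(n)$ as a triple $(B, G, E)$, where $B$ and $G$ are the sets of blue odd and green odd parts and $E$ is the set of blue even parts. Send $B$ to the overlined parts of $\phi(\lambda)$. The union $G \cup E$ is a partition of $|G| + |E|$ into pairwise distinct positive integers (since $G$ is odd and $E$ is even, they are disjoint); Glaisher's bijection, which replaces a distinct part $2^a d$ ($d$ odd) by $2^a$ copies of $d$, converts it into a partition into odd parts with repetition, which becomes the non-overlined portion of $\phi(\lambda)$. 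The inverse map extracts overlined parts as $B$ and applies inverse Glaisher to the non-overlined parts, splitting the resulting distinct parts into odd ($G$) and even ($E$). This establishes (a).

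For (b)--(e), part (a) yields $E_0(n)+E_1(n)=E_2(n)+E_3(n)=\overline{p_o}(n)$, so it suffices to evaluate the signed sums $E_0-E_1=\sum_\lambda(-1)^{|E|}$ and $E_2-E_3=\sum_\lambda(-1)^{\ell(\lambda)}$, whose generating functions are the Jacobi theta identities
\[
(-q;q^2)_\infty^2(q^2;q^2)_\infty=\sum_{k\in\Z}q^{k^2},\qquad (q;q^2)_\infty^2(q^2;q^2)_\infty=\sum_{k\in\Z}(-1)^kq^{k^2}.
\]
I plan to prove both with a single sign-reversing involution $\Phi$ on $E(n)$ whose primitive move is the weight-preserving swap $\{k_b,k_g\}\leftrightarrow\{(2k)_b\}$ for odd $k$. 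This move changes $|E|$ by $\pm1$ and $\ell(\lambda)$ by $\mp1$, flipping both sign statistics at once. The intended fixed points are the two staircase partitions $\{1_b,3_b,\ldots,(2k-1)_b\}$ and $\{1_g,3_g,\ldots,(2k-1)_g\}$, which sum to $n=k^2$; each contributes $+1$ to $E_0-E_1$ and $(-1)^k=(-1)^n$ to $E_2-E_3$, exactly matching the claimed formulas.

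The main obstacle is turning the primitive move into a globally defined involution whose fixed points are \emph{only} the two staircases. Running the swap at the smallest odd $k$ for which $(k_b,k_g\in\lambda$ and $(2k)_b\notin\lambda)$ or $((2k)_b\in\lambda$ and $k_b,k_g\notin\lambda)$ pairs off many partitions but leaves spurious fixed points: blue even parts with $2$-adic valuation at least $2$ (i.e.\ $4m,8m,\ldots\in E$), which cannot be split by the primitive move, and "half-populated" configurations in which exactly one of $k_b,k_g$ coexists with $(2k)_b$. To dispose of the former I would cascade the basic move at higher $2$-adic levels, treating a blue even part $2^ab$ recursively; to dispose of the latter I would chain in colour-swap and three-part exchanges such as $\{k_b,(2k)_b\}\leftrightarrow\{(3k)_b\}$, all governed by a "smallest violation" priority so that the composite is involutive, sign-reversing, and fixes exactly the two staircases. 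Checking that this cascading construction is well-defined, terminates correctly, and has no unintended fixed points is the principal technical content.
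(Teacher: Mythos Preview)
Your treatment of part (a) via Glaisher's bijection is correct and is essentially the Chen--Zou argument the paper cites. Your reduction of (b)--(e) to the two signed sums $E_0-E_1$ and $E_2-E_3$ is also exactly the paper's first step (its Theorem~1.2).

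The genuine gap is in your involution for (b)--(e). You yourself observe that the primitive swap $\{k_b,k_g\}\leftrightarrow\{(2k)_b\}$ leaves many spurious fixed points, but the repairs you sketch do not close the hole. The ``cascade at higher $2$-adic levels'' has no clear meaning in this setting: a part $(4m)_b$ cannot be split as $\{(2m)_b,(2m)_g\}$ because green even parts are forbidden, so there is no level-up analogue of the primitive move. The three-part exchange $\{k_b,(2k)_b\}\leftrightarrow\{(3k)_b\}$ creates fresh collisions (what if $(3k)_b$ or $(3k)_g$ is already present?) and interacts with the primitive move at $k'=3k$; a colour swap $k_b\leftrightarrow k_g$ changes neither $|E|$ nor $\ell(\lambda)$, so it cannot itself reverse sign and only adds coordination burden. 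Most importantly, once several non-commuting local moves are in play, running ``the smallest violation first'' does \emph{not} automatically give an involution: applying the composite map twice need not return the original partition, and you offer no mechanism or proof that it does. Since you explicitly defer this verification as ``the principal technical content,'' the argument as written is incomplete, and I do not see a straightforward way to complete it along these lines.

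The paper avoids a single global involution entirely by decoupling the even and odd blocks. It applies Franklin's involution \emph{only} to the set of blue even parts (after halving them); this is sign-reversing in the number of even parts, and its fixed points are exactly the pentagonal staircases, leaving a signed weight $(-1)^m$ with even-block total $m(3m\pm1)$. The blue/green odd parts are handled by an independent bijection (Wright's concatenation of major and minor halves of a ``parallel bi-partition''), which shows that for each fixed $c=\bigl|\,\#(\text{blue odd})-\#(\text{green odd})\,\bigr|$ the number of odd configurations of a given weight is an ordinary partition number $p(\cdot)$. The alternating sum over $m$ is then literally Euler's pentagonal recurrence $\sum_m(-1)^m p\bigl(N-\tfrac{1}{2}m(3m\pm1)\bigr)=0$, with the sole exception $p(0)=1$ occurring precisely when $n=c^2$ (twice, by the $L\leftrightarrow R$ symmetry). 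This Franklin-on-evens plus Wright-on-odds decomposition is both different from and substantially cleaner than the global involution you are attempting.
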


\indent In the following, we  provide combinatorial proofs for parts (b) through (e). Note that
\[
E(n) = \overline{p_o}(n),
\]
\[
E(n) = E_0(n) + E_1(n) = E_2(n) + E_3(n).
\]
Thus, it suffices to prove that
\begin{theorem}\label{Q}
For any nonnegative integer $n$, there holds
\begin{align*}
\text{(A)}\quad E_0(n)-E_1(n) &=
\begin{cases}
2 & \text{if } n \text{ is a square},\\
0 & \text{otherwise},
\end{cases} \\[1em]
\text{(B)}\quad E_2(n)-E_3(n) &=
\begin{cases}
2 \cdot(-1)^n & \text{if } n \text{ is a square},\\
0 & \text{otherwise}.
\end{cases}
\end{align*}
\end{theorem}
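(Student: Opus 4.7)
The plan is to prove (A) and (B) together via a single sign-reversing involution on the set of partitions enumerated by $E(n)$. Decomposing each such partition as a triple $(A,B,C)$, where $A$ is the set of blue odd parts, $B$ is the set of green odd parts, and $C$ is the set of (necessarily blue) even parts, one has on generating functions
\[
\sum_{n\ge 0}(E_0(n)-E_1(n))\,q^n=(-q;q^2)_\infty^{2}\,(q^2;q^2)_\infty,\qquad
\sum_{n\ge 0}(E_2(n)-E_3(n))\,q^n=(q;q^2)_\infty^{2}\,(q^2;q^2)_\infty,
\]
which by Jacobi's triple product equal $\sum_{k\in\mathbb{Z}}q^{k^2}$ and $\sum_{k\in\mathbb{Z}}(-1)^k q^{k^2}$ respectively. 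The analytic content of Theorem~\ref{Q} is therefore immediate; the goal is to realise the cancellation combinatorially.

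The natural candidates for fixed points are the two ``staircase'' triples $(\{1,3,\dots,2k-1\},\emptyset,\emptyset)$ and $(\emptyset,\{1,3,\dots,2k-1\},\emptyset)$ of weight $k^2$, each with zero even parts and exactly $k$ total parts; together they contribute $+2$ to (A) and $2(-1)^k=2(-1)^n$ to (B) at $n=k^2$, matching both right-hand sides. The core move is \emph{merge/split}
\[
m_b+m_g\;\longleftrightarrow\;(2m)_b,
\]
trading an equal-valued pair of odd parts of opposite colour for a single blue even part of twice the value, or vice versa. Crucially this move preserves $n$ and simultaneously flips the parity of $\#C$ and the parity of $\#A+\#B+\#C$, so one involution handles both (A) and (B).

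The main obstacle is that merge/split alone is not enough: a triple such as $(\{1\},\{3\},\emptyset)$ at $n=4$ admits neither a merge (since $A\cap B=\emptyset$) nor a split (since $C=\emptyset$), yet must not survive as a fixed point. In Franklin's spirit I will supplement it by a secondary \emph{transfer}
\[
m_c\;\longleftrightarrow\;(m-2j)_c+(2j)_b,\qquad c\in\{b,g\},
\]
which peels a blue even piece off a single odd part and which also flips both relevant parities. The canonical rule for selecting the move, together with its parameter, is to locate the smallest positive integer $m_0$ at which the triple deviates from one of the two staircases and to act at $m_0$, distinguishing the cases $m_0\in A\cap B$ (merge), $2m_0\in C$ (split), and $m_0\in A\triangle B$ (transfer of the unique odd occurrence of $m_0$ in its colour). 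Verifying that this rule is well-defined on every non-staircase triple, that the associated map squares to the identity once the boundary collisions between a proposed split or transfer and parts already present in $A$, $B$, or $C$ are resolved by a cascading convention, and that the only surviving fixed points are the two staircases, is the technical heart of the argument and is where the bulk of the work will lie.
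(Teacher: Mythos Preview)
Your route is genuinely different from the paper's. The paper never builds a single involution on the triples $(A,B,C)$; instead it factors the problem into three classical pieces. First, Franklin's involution is applied \emph{only} to the even component $C$ (viewed modulo~$2$), cancelling all triples except those whose even parts form a pentagonal configuration summing to $m(3m\pm1)$. Second, for each surviving pentagonal even component, the odd bi-partition $(A,B)$ is counted via Wright's concatenation bijection (Lemma~\ref{lem1}), which shows that for fixed $c=\bigl||A|-|B|\bigr|$ the number of such pairs is $p\bigl(d-\tfrac{1}{2}c(c+1)\bigr)$ for an appropriate $d$. Third, the alternating sum over the pentagonal values of $m$ collapses by Euler's recurrence (Lemma~\ref{lem2}), leaving a nonzero contribution only when $d=\tfrac{1}{2}c(c\pm1)$, i.e.\ $n=c^2$. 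So the paper's argument is modular and rests entirely on off-the-shelf bijections, whereas yours would be a self-contained involution whose fixed points are visibly the two staircases; if it worked, it would handle (A) and (B) simultaneously and would be conceptually more direct.

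That said, what you have is a plan, not a proof, and the gap is real. The merge/split move is fine where it applies, but your secondary transfer $m_c\leftrightarrow(m-2j)_c+(2j)_b$ is not specified: you never say which $j$ to take, nor which of the two staircases governs the choice of $m_0$. Already on your own test case $(\{1\},\{3\},\emptyset)$ this bites: comparing against the blue staircase gives $m_0=3$ and transferring $3_g\mapsto 1_g+2_b$ yields $(\{1\},\{1\},\{2\})$, but applying your rule again now finds $1\in A\cap B$ and prescribes the merge $1_b+1_g\mapsto 2_b$, colliding with the $2_b$ already present. You defer all such collisions to an unspecified ``cascading convention,'' but this is exactly where Franklin-type constructions live or die: each ad hoc repair tends to spawn a new collision elsewhere, and proving that a cascaded map squares to the identity is typically as hard as the original problem. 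Absent a concrete, verified rule, the proposal does not yet establish the theorem; the paper's approach sidesteps this entirely by never mixing the even and odd components in a single move.
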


\textbf{Remark.} In  Theorem \ref{E},  a combinatorial proof of part (a) was previously established by Chen and Zou \cite{CZ25} through the construction of an explicit bijection. We  note that Bugleev \cite{Bugleev-arxiv} also combinatorially proved parts (b) through (e), but their proofs are different from ours.\\

\section{Combinatorial  proof of Theorem \ref{Q}}
As the proofs of Theorem \ref{Q} (A) and (B) are analogous, we first prove (A) and then (B).
From the definition of $E(n)$, any partition counted by $E(n)$ can be uniquely decomposed into three types of parts:
\begin{itemize}
\item Blue even parts (denoted by the set $\lambda_{\text{even}}$),
\item Green odd parts (denoted by the set $\alpha_{\text{odd}}$),
\item Blue odd parts (denoted by the set $\beta_{\text{odd}}$),
\end{itemize}
where all parts across these sets are distinct.
%the blue positive even parts (denoted by $\lambda_{\text{even}}$), the green positive odd parts (denoted by $\alpha_{\text{odd}}$), and the blue positive odd parts (denoted by $\beta_{\text{odd}}$), where each of the part contains distinct elements.\\

\indent Let $i,j,k \in \mathbb{N}$ denote the number of parts in $\lambda_{\text{even}}$, $\alpha_{\text{odd}}$, $\beta_{\text{odd}}$, respectively.
%blue even parts, green odd parts, and blue odd parts in the two-colored partition $E(n)$.
Now, suppose  a two-colored partition counted by $E(n)$ is given by the triple $(\lambda_{\text{even}}, \alpha_{\text{odd}}, \beta_{\text{odd}})$, where:
\begin{align*}
\lambda_{\text{even}}& = (\lambda_{1}, \lambda_{2}, \ldots, \lambda_{i}),\\
\alpha_{\text{odd}}& = (2\alpha_{1}+1, 2\alpha_{2}+1, \ldots, 2\alpha_{j}+1),\\
\beta_{\text{odd}}& = (2\beta_{1}+1, 2\beta_{2}+1, \ldots, 2\beta_{k}+1)
\end{align*}
represent the sequences of blue even parts,  green odd parts, and  blue odd parts, respectively, each  arranged in decreasing order.\\
\indent To proceed, it is expedient to introduce the notions of a \emph{bi-partition} and a \emph{system of parallel bi-partitions}.
\begin{definition}\cite[p.284]{JS12}
A \emph{bi-partition} of nonnegative integer $n$ is defined as a division of $n$ into two subsets of odd integers, denoted by $L$ and $R$.  In other words, a pair $(L,R)$ is a \emph{bi-partition} of $n$ if
\[
    \sum_{l \in L} l \;+\; \sum_{r \in R} r \;=\; n.
\]
\indent A \emph{parallel bi-partition system} of  $n$ is a  \emph{bi-partition} $(L,R)$ with the additional constraint that the cardinalities of  $L$ and $R$ differ by a fixed constant $c \geq 0$, i.e.,
\[
   \bigl|\,|L| - |R|\,\bigr| \;=\; c.
\]
\end{definition}

\indent We now associate the sets of odd parts with a \emph{bi-partition}. Specifically, we let the blue odd parts $\beta_{\text{odd}}$ correspond to the left part $L$, and the green odd parts $\alpha_{\text{odd}}$ correspond to the right part $R$. The constant $c$ for the resulting \emph{parallel bi-partition system} is then the absolute difference in the number of parts between these two sets, i.e. $c=\big| |\beta_{\text{odd}}|-|\alpha_{\text{odd}}| \big|$.
%It is clear that $\alpha_{\text{odd}}$ and $\beta_{\text{odd}}$ can naturally be regarded as a \emph{bi-partition}, so we consider $\beta_{\text{odd}}$ as \(L\) and $\alpha_{\text{odd}}$ as \(R\). Based on this, we fix the constant $c$ as the difference between $\alpha_{\text{odd}}$ and $\beta_{\text{odd}}$, and construct the corresponding \emph{parallel bi-partition-system}.

As an illustration, consider the partition $2_b + 5_b + 3_b + 1_b + 3_g$ counted by $E(14)$. The blue even part ($2_b$) is handled separately. The remaining odd parts form a \emph{bi-partition}. Graphically, each square represents a unit of $1$. The corresponding \emph{parallel bi-partition system} with cardinality difference $c=|3-1|=2$ is given by:

\[
\begin{tikzpicture}[scale=0.5, baseline=(current bounding box.center)]
    % 左图
    \fill[myblue] (1,0) rectangle ++(5,-1);
    \fill[myblue] (1,-1) rectangle ++(3,-1);
    \fill[myblue] (1,-2) rectangle ++(1,-1);

    \foreach \y/\n in {0/5,1/3,2/1} {
        \foreach \x in {1,...,\n} {
            \draw (\x,-\y) rectangle ++(1,-1);
        }
    }
    \node[below] at (3,-3.5) {$L=\beta_{\text{odd}}=(5_b,3_b,1_b)$};
\end{tikzpicture}
\;\;
\tikz[baseline]{\draw[dashed] (0,-1)--(0,1);}
\;\;
\begin{tikzpicture}[scale=0.5, baseline=(current bounding box.center)]
    % 右图
    \fill[mygreen] (1,0) rectangle ++(3,-1);
    \foreach \y/\n in {0/3} {
        \foreach \x in {1,...,\n} {
            \draw (\x,-\y) rectangle ++(1,-1);
        }
    }
    \node[below] at (3,-3.5) {$R=\alpha_{\text{odd}}=(3_g)$};
\end{tikzpicture}
\]

By symmetry, we may assume without loss of generality that $|L| - |R| = c \geq 0$.
\begin{lemma} \label{lem1}
Let $n \in \mathbb{N^+}$. Consider a parallel bi-partition system of $n$ composed of:
\[
R = \alpha_{\text{odd}} = (2\alpha_{1}+1, \dots, 2\alpha_{j}+1), \quad
L = \beta_{\text{odd}} = (2\beta_{1}+1, \dots, 2\beta_{k}+1)
\]
with $k-j=c\geq0$ and all parts distinct and postive. Define the associated integers:
\begin{align*}
d &= \sum_{i=1}^{j} \alpha_{i} + \sum_{i=1}^{k} (\beta_{i}+1),\\
t &= \sum_{i=1}^{j} (\alpha_{i}+1) + \sum_{i=1}^{k} \beta_{i}.
\end{align*}
Then, $n = d+t$ holds.
Moreove, for a fixed difference $c$ and a fixed value of $d$, the number of such parallel bi-partition systems corresponding to $(\alpha_{\text{odd}}, \beta_{\text{odd}})$ is given by
\[
p\!\left(d - \tfrac{1}{2}c(c+1)\right),
\]
where $p(m)$ denotes the number of integer partitions of $m$.
\end{lemma}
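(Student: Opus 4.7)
The first identity $n = d + t$ I shall verify by direct bookkeeping: each green odd part $2\alpha_i + 1$ contributes $\alpha_i$ to $d$ and $\alpha_i + 1$ to $t$, while each blue odd part $2\beta_i + 1$ contributes $\beta_i + 1$ to $d$ and $\beta_i$ to $t$; summing over all parts of $L \cup R$ returns $n$.

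For the enumeration, my plan is to build an explicit bijection between parallel bi-partition systems with parameters $(c, d)$ and ordinary partitions of $m := d - \tfrac{1}{2}c(c+1)$, realizing combinatorially the Durfee rectangle identity
\[
\sum_{j \geq 0} \frac{q^{j(j+c)}}{(q;q)_j\,(q;q)_{j+c}} = \frac{1}{(q;q)_\infty}.
\]
Given strict sequences $\alpha_1 > \cdots > \alpha_j \geq 0$ and $\beta_1 > \cdots > \beta_k \geq 0$ with $k = j + c$, I first subtract the staircases $(j-1, j-2, \ldots, 0)$ and $(k-1, k-2, \ldots, 0)$ to obtain weak partitions $\widetilde{\alpha}_i := \alpha_i - (j-i) \geq 0$ with at most $j$ parts and $\widetilde{\beta}_i := \beta_i - (k-i) \geq 0$ with at most $k$ parts. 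I then assemble a single partition $\mu$ by placing a Durfee rectangle of dimensions $j \times (j+c)$, appending the rows of $\widetilde{\alpha}$ to its right and the conjugate $\widetilde{\beta}^{\,\prime}$ (whose parts are bounded by $k = j + c$) below: concretely, $\mu_i := (j + c) + \widetilde{\alpha}_i$ for $1 \leq i \leq j$, with the tail $\widetilde{\beta}^{\,\prime}$ following. Since $\widetilde{\beta}^{\,\prime}_1 \leq j + c \leq \mu_j$, the result is a legitimate weakly decreasing partition.

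The inverse map will use the uniqueness of the Durfee rectangle decomposition: for any partition $\mu$ of $m$, take $j$ to be the largest index with $\mu_j \geq j + c$, read off $\widetilde{\alpha}_i := \mu_i - (j + c)$ for $1 \leq i \leq j$ and $\widetilde{\beta}^{\,\prime}_\ell := \mu_{j + \ell}$ for $\ell \geq 1$, and then reinstate the two staircases to recover $(\alpha, \beta)$. The step I expect to require the most care is the weight computation: combining $|\alpha| + |\beta| = d - k$ with $|\widetilde{\alpha}| = |\alpha| - \binom{j}{2}$, $|\widetilde{\beta}| = |\beta| - \binom{k}{2}$, and the rectangle contribution $j(j+c)$, I must verify that
\[
|\mu| \;=\; j(j + c) + |\widetilde{\alpha}| + |\widetilde{\beta}|
\]
collapses, after substituting $k = j + c$, exactly to $d - \tfrac{1}{2}c(c+1)$; the coordination of the two staircase corrections with the rectangle area and the $-k$ shift is the one place where the bookkeeping must be done attentively. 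Everything else is a direct transcription of the Durfee rectangle construction.
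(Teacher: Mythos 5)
Your proposal is correct, and both halves check out: the bookkeeping for $n=d+t$ is exactly the paper's computation, and your weight calculation does collapse as required, since $|\alpha|+|\beta|=d-k$ and $\binom{j}{2}+\binom{k}{2}=j(j+c)-j+\binom{c}{2}$ give $|\mu|=j(j+c)+(d-k)-\bigl(j(j+c)-j+\tbinom{c}{2}\bigr)=d-c-\binom{c}{2}=d-\tfrac12 c(c+1)$. Where you differ from the paper is in the choice of bijection realizing the count. The paper follows Wright's concatenation construction: it forms the ``major half'' $(\beta_1+1,\dots,\beta_k+1)$ arranged vertically with unit downward shifts and the ``minor half'' $(\alpha_1,\dots,\alpha_j)$ arranged horizontally with unit shortenings, joins the two pieces after inserting $c$ empty rows, and observes that the resulting diagram of $d$ cells decomposes into a fixed triangle of size $\tfrac12 c(c+1)$ plus an unrestricted partition of the remainder; the reversibility is asserted from the pictures. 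You instead use the Durfee rectangle dissection: strip the staircases from $\alpha$ and $\beta$, attach $\widetilde{\alpha}$ to the right of a $j\times(j+c)$ rectangle and $\widetilde{\beta}^{\,\prime}$ below it, and invert via the uniqueness of the largest $j$ with $\mu_j\geq j+c$. The two bijections are close relatives---both normalize strict sequences by staircases and merge them into one partition with a triangular defect---but yours has the advantage of fully explicit forward and inverse formulas and an explicit weight verification, whereas the paper's version delegates the geometry to the cited construction and to inspection of its two worked examples. Either route is a complete proof of the lemma.
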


\begin{proof}
 We construct the concatenation diagram according to the method in \cite[p.56]{EW65} as follows:  (1) The major half of $\beta_{\text{odd}}$, defined as the sequence $\beta'_{\text{odd}}=(\beta_1+1, \beta_2+1, \ldots, \beta_k+1)$. The minor half of $\alpha_{\text{odd}}$, defined as the sequence $\alpha'_{\text{odd}}=(\alpha_1, \alpha_2, \ldots, \alpha_j)$. (2) Represent the sequence   $(\beta_{1}+1,\ldots,\beta_{k}+1)$  vertically. Each entry shifted one unit downward.
(3) Represent the sequence $(\alpha_{1},\ldots,\alpha_{j})$ horizontally. Each entry shortened by one unit.
(4) These two parts are joined after inserting exactly \(c=k-j\) empty rows.

The following examples illustrate this construction.

\textbf{Example 1.} Let
$\beta_{\text{odd}} = (9_b, 5_b, 3_b, 1_b)$ ($k=4$) and $\alpha_{\text{odd}} = (7_g, 1_g)$ ($j=2$). So $c=2$.
\[
\begin{tikzpicture}[scale=0.5, baseline=(current bounding box.center)]
    % 左图
    \fill[myblue] (1,0) rectangle ++(1,-1);
    \fill[myblue] (1,-1) rectangle ++(2,-1);
    \fill[myblue] (1,-2) rectangle ++(3,-1);
    \fill[myblue] (1,-3) rectangle ++(4,-1);
    \fill[myblue] (1,-4) rectangle ++(1,-1);

    \fill[mygreen] (4,-2) rectangle ++(3,-1);

    \foreach \y/\n in {0/1,1/2,2/6,3/4,4/1}
    {
        \foreach \x in {1,...,\n} {
            \draw (\x,-\y) rectangle ++(1,-1);
        }
    }

    \draw[red, very thick] (4,-2) -- (4,-3); % 竖线
    \draw[red, very thick] (4,-3) -- (5,-3);
    \draw[red, very thick] (5,-3) -- (5,-4);

    % 在三角数位置 (例：n=6, y=2, 从x=1到x=6)
    \draw[mygray, very thick] (0,-2) -- (8,-2); % 水平红线
\end{tikzpicture}
\]

\[
\beta'_{\text{odd}}=(5_b, 3_b, 2_b, 1_b),\quad \alpha'_{\text{odd}} = (3_g)
\]

\textbf{Note:} In the figure, when the minor half of $1$ leaves a gap at the red joining line, it is denoted as $1_{g}$.

\indent \textbf{Example 2.} Let $\beta_{\text{odd}} = (13_b, 9_b, 5_b, 3_b, 1_b)$ ($k=5$) and $\alpha_{\text{odd}} = (9_g, 7_g, 3_g)$ ($j=3$). So $c=2$.
\[
\begin{tikzpicture}[scale=0.5, baseline=(current bounding box.center)]
    % 蓝色区域
    \fill[myblue] (1,0) rectangle ++(1,-1);
    \fill[myblue] (1,-1) rectangle ++(2,-1);
    \fill[myblue] (1,-2) rectangle ++(3,-1);
    \fill[myblue] (1,-3) rectangle ++(4,-1);
    \fill[myblue] (1,-4) rectangle ++(5,-1);
    \fill[myblue] (1,-5) rectangle ++(2,-1);
    \fill[myblue] (1,-6) rectangle ++(1,-1);

    % 绿色区域
    \fill[mygreen] (4,-2) rectangle ++(4,-1);
    \fill[mygreen] (5,-3) rectangle ++(3,-1);
    \fill[mygreen] (6,-4) rectangle ++(1,-1);

    % 网格线
    \foreach \y/\n in {0/1,1/2,2/7,3/7,4/6,5/2,6/1}
    {
        \foreach \x in {1,...,\n} {
            \draw (\x,-\y) rectangle ++(1,-1);
        }
    }

    % 红色拼接边界
    \draw[red, very thick] (4,-2) -- (4,-3);
    \draw[red, very thick] (4,-3) -- (5,-3);
    \draw[red, very thick] (5,-3) -- (5,-4);
    \draw[red, very thick] (5,-4) -- (6,-4);
    \draw[red, very thick] (6,-4) -- (6,-5);

    % 在三角数位置 (例：n=6, y=2, 从x=1到x=6)
    \draw[mygray, very thick] (0,-2) -- (9,-2); % 水平红线
\end{tikzpicture}
\]
\[
\beta'_{\text{odd}}=(7_b, 5_b, 3_b, 2_b, 1_b),\quad \alpha'_{\text{odd}} = (4_g, 3_g, 1_g)
\]

\indent Analyzing these figures, we can divide the concatenated diagram into two region: the upper region, which is a fixed  triangular number $T(c)=c(c+1)/2$ (determined by the shift and the $c$ empty rows),  and the lower region, which represents an unrestricted partition of the remaining squares.

The  process described above is reversible. Starting with a constant $c$, we determine the corresponding triangular number, then give the unrestricted partition, restoring the concatenated diagram to a \emph{parallel bi-partition system}, leading to the two-colored partition. Therefore, this construction establishes  a bijection.\\
\indent Recall that the total number $n$ is given by the sum of all parts:
\[
n = \sum_{i=1}^{j} (2\alpha_{i}+1) + \sum_{i=1}^{k} (2\beta_{i}+1).
\]
The quantities $d$ and $t$ are defined as:
\[
d = \sum_{i=1}^{j} \alpha_{i} + \sum_{i=1}^{k} (\beta_{i}+1),
\qquad
t = \sum_{i=1}^{j} (\alpha_{i}+1) + \sum_{i=1}^{k} \beta_{i}.
\]
It is straightforward to verify  that $n = d + t$. In our geometric interpretation,  $d$ represents the total number of squares in the concatenated diagram.

Given $c$, an unrestricted partition of $d - \frac{1}{2}c(c+1)$ uniquely determines the concatenated diagram and the corresponding two-colored partition.
Hence, for fixed parameters $c$ and $d$, the number of \emph{parallel bi-partition systems} with a constant difference $c$ for $n$ is exactly
\[
p\!\left(d - \tfrac{1}{2}c(c+1)\right).
\]
\end{proof}

The following lemma, a classical result due to Euler \cite[Corollary 1.8]{A98}, provides a recurrence relation for the partition function $p(n)$ in terms of generalized pentagonal numbers.
It will play a crucial role in our combinatorial analysis of the contributions  from  even and odd parts in two-colored partitions.
\begin{lemma}\cite[Corollary 1.8(Euler)]{A98} \label{lem2}
Let $n\in\mathbb{N^+}$. Then
\begin{align*}
0 =\;& p(n) - p(n-1) - p(n-2) + p(n-5) + p(n-7) - \cdots \\[4pt]
&\; + (-1)^m\, p\!\left(n - \tfrac{1}{2}m(3m-1)\right) + (-1)^m\, p\!\left(n - \tfrac{1}{2}m(3m+1)\right) + \cdots
\end{align*}
where we recall that $p(0)=1$ and $p(M)=0$ for all negative integers $M$.
\end{lemma}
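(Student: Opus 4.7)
The plan is to derive the recurrence from Euler's pentagonal number theorem combined with the generating function for $p(n)$. Recall that
\[
\sum_{n\ge 0} p(n)\, q^n \;=\; \prod_{k\ge 1}\frac{1}{1-q^k},
\]
so if one can establish the pentagonal number theorem
\[
\prod_{k\ge 1}(1-q^k) \;=\; \sum_{m\in\mathbb{Z}} (-1)^m\, q^{m(3m-1)/2},
\]
then multiplying these two power series gives $1$, and extracting the coefficient of $q^n$ for $n\ge 1$ yields exactly the claimed recurrence (after grouping the contributions of $m$ and $-m$ to produce both generalized pentagonal numbers $m(3m-1)/2$ and $m(3m+1)/2$).

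Since the spirit of the paper is combinatorial and Franklin's involution is listed among its keywords, I would establish the pentagonal number theorem bijectively. Let $D_e(n)$ and $D_o(n)$ count partitions of $n$ into distinct parts with an even, respectively odd, number of parts, so that $\prod_{k\ge 1}(1-q^k) = \sum_{n\ge 0}(D_e(n)-D_o(n))\, q^n$. Given $\lambda=(\lambda_1>\lambda_2>\cdots>\lambda_\ell)$ with distinct parts, let $s=\lambda_\ell$ be the smallest part and let $\sigma$ be the length of the longest descending run $\lambda_1,\lambda_1-1,\lambda_1-2,\ldots$ at the top of the Young diagram. Franklin's move either strips the bottom row of $s$ cells and appends a cell to each of the top $s$ parts, or it performs the reverse operation, whichever is legal. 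This is a sign-reversing involution on all such $\lambda$ except the two staircase families $(2m-1,2m-2,\ldots,m)$ and $(2m,2m-1,\ldots,m+1)$, of sizes $m(3m-1)/2$ and $m(3m+1)/2$, each contributing sign $(-1)^m$. Summing the surviving contributions gives the pentagonal number theorem.

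With this identity in hand, the recurrence is just a Cauchy-product coefficient extraction: the $q^0$-coefficient of the product is $p(0)=1$, and for every $n\ge 1$ the $q^n$-coefficient must vanish, which after writing out the terms is precisely the displayed identity under the stated conventions $p(0)=1$ and $p(M)=0$ for $M<0$. The main obstacle lies in the combinatorial step rather than the algebraic one: verifying Franklin's involution rigorously requires a careful case analysis at the boundary, where $s$ meets $\sigma$ or where $\ell$ is small, and one must check that precisely the two families of staircase partitions escape the involution. Once those combinatorial details are fixed, the remainder of the argument is a formal manipulation of power series.
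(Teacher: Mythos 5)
Your proposal is correct and follows exactly the route of the source the paper cites for this lemma: the paper states Lemma \ref{lem2} without proof, referring to Andrews' Corollary 1.8, which is obtained precisely by multiplying the generating function $\sum p(n)q^n=\prod(1-q^k)^{-1}$ against the pentagonal number theorem (itself proved by Franklin's involution, the same involution the paper adapts elsewhere) and extracting coefficients. No gaps; the boundary cases of the involution you flag are the standard ones and are handled in the cited reference.
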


\begin{proof}[The proof of Theorem 1.2]
First, we construct the Franklin involution \cite[Theorem 1.6]{A98} specifically on the set of  blue even parts, denoted $\lambda_{\text{even}}$, within a two-colored partition counted by $E(n)$. This involution will be used to cancel out  certain partitions in a reversing manner.\\
\indent For a partition $\lambda_{\text{even}}=(\lambda_1,\lambda_2,\cdots,\lambda_i)$ (in decreasing order), let $s(\lambda) = \lambda_{i}$ denote its smallest part.
Furthermore,  the largest part $\lambda_{1}$ begins a sequence of consecutive even integers.
 We denote the length of  this consecutive sequence by $\sigma(\lambda)$, defined as  the largest integer $j$ such that $\lambda_{j} = \lambda_{1} - j + 1$ for $1\leq j\leq i$.\\
\indent Graphically, we perform a modulo-2 partition on each blue positive even part, where each circle represents a $2$. The parameters $s(\lambda)$ and $\sigma(\lambda)$ can then be illustrated as follows.
\[
\begin{tikzpicture}[scale=0.5, baseline=(current bounding box.center)]
    % 黄色
    \fill[yellow] (1.5,-3.5) circle (0.5);
    % 红色
    \fill[myred] (4.5,-1.5) circle (0.5);
    \fill[myred] (5.5,-0.5) circle (0.5);

    \foreach \y/\n in {0/5,1/4,2/2,3/1} {
        \foreach \x in {1,...,\n} {
            \draw (\x+0.5,-\y-0.5) circle (0.5);
        }
    }
    \node[below] at (3,-4) {$\lambda_{\text{even}} = (10_b, 8_b, 4_b, 2_b)$};
\end{tikzpicture}
\]

\indent In the diagram, the yellow circle represents $s(\lambda)$ and the red circle represents $\sigma(\lambda)$.\\
\indent The transformation (Franklin involutions) is then applied according to the following rule:
\begin{itemize}
\item[Case 1:] If \(\s(\lambda) \leq \sigma(\lambda)\), adding one to each of the \(\s(\lambda)\) largest parts of \(\lambda\) and deleting the smallest part.

\[
\begin{tikzpicture}[scale=0.5, baseline=(current bounding box.center)]
    % 左图
    % 黄色
    \fill[yellow] (1.5,-3.5) circle (0.5);
    % 红色
    \fill[myred] (4.5,-1.5) circle (0.5);
    \fill[myred] (5.5,-0.5) circle (0.5);

    \foreach \y/\n in {0/5,1/4,2/2,3/1}
    {
        \foreach \x in {1,...,\n} {
            \draw (\x+0.5,-\y-0.5) circle (0.5);
        }
    }

    \node[below] at (3,-4) {$\lambda_{\text{even}} = (10_b, 8_b, 4_b, 2_b)$};
\end{tikzpicture}
\;\;\overset{\longleftrightarrow}\;\;
\begin{tikzpicture}[scale=0.5, baseline=(current bounding box.center)]
    % 右图
    % 黄色
    \fill[yellow] (6.5,-0.5) circle (0.5);
    % 红色
    \fill[myred] (4.5,-1.5) circle (0.5);
    \fill[myred] (5.5,-0.5) circle (0.5);

    % 右图循环画黄色圆
    \foreach \y/\n in {0/6,1/4,2/2}
    {
        \foreach \x in {1,...,\n} {
            \draw (\x+0.5,-\y-0.5) circle (0.5);
        }
    }

    \node[below] at (4,-4) {$\lambda'_{\text{even}} = (12_b, 8_b, 4_b)$};
\end{tikzpicture}
\]

\item[Case 2:] If \(\s(\lambda) > \sigma(\lambda)\), subtracting one from each of the \(\sigma(\lambda)\) largest parts of \(\lambda\) and inserting a new smallest part of size \(\sigma(\lambda)\).

\[
\begin{tikzpicture}[scale=0.5, baseline=(current bounding box.center)]
    % 左图
    % 黄色
    \fill[yellow] (1.5,-2.5) circle (0.5);
    \fill[yellow] (2.5,-2.5) circle (0.5);
    \fill[yellow] (3.5,-2.5) circle (0.5);
    % 红色
    \fill[myred] (5.5,-1.5) circle (0.5);
    \fill[myred] (6.5, -0.5) circle (0.5);

    % 循环画圆
    \foreach \y/\n in {0/6,1/5,2/3}
    {
        \foreach \x in {1,...,\n} {
            \draw (\x+0.5,-\y-0.5) circle (0.5);
        }
    }

    \node[below] at (3.5,-4.5) {$\lambda_{\text{even}} = (12_b, 10_b, 6_b)$};
\end{tikzpicture}
\;\;\overset{\longleftrightarrow}\;\;
\begin{tikzpicture}[scale=0.5, baseline=(current bounding box.center)]
    % 右图
    % 黄色
    \fill[yellow] (1.5,-2.5) circle (0.5);
    \fill[yellow] (2.5,-2.5) circle (0.5);
    \fill[yellow] (3.5,-2.5) circle (0.5);
    % 红色
    \fill[myred] (1.5,-3.5) circle (0.5);
    \fill[myred] (2.5,-3.5) circle (0.5);

    % 循环画圆
    \foreach \y/\n in {0/5,1/4,2/3,3/2}
    {
        \foreach \x in {1,...,\n} {
            \draw (\x+0.5,-\y-0.5) circle (0.5);
        }
    }

    \node[below] at (2.5,-4.5) {$\lambda'_{\text{even}} = (10_b, 8_b, 6_b, 4_b)$};
\end{tikzpicture}
\]

\end{itemize}

\indent A difficulty arises when the yellow and red segments, denoted by $s(\lambda)$ and $\sigma(\lambda)$, overlap.
In fact, the procedure remains valid in all cases except when $s(\lambda) = \sigma(\lambda)$ or $s(\lambda) = \sigma(\lambda)+1$,
which correspond to the values  $\frac{1}{2}m(3m-1)$ and $\frac{1}{2}m(3m+1)$, respectively.
We  therefore refer to such instances as the ``pentagonal case''.
For example, this occurs when $s(\lambda) = \sigma(\lambda) = 3$ or $s(\lambda) = 3, \, \sigma(\lambda) = 2$.

\[
\begin{tikzpicture}[scale=0.5, baseline=(current bounding box.center)]
    % 左图
    % 黄色
    \fill[yellow] (1.5,-2.5) circle (0.5);
    \fill[yellow] (2.5,-2.5) circle (0.5);
    % 紫色
    \fill[mypurple] (3.5,-2.5) circle (0.5);
    % 红色
    \fill[myred] (4.5,-1.5) circle (0.5);
    \fill[myred] (5.5,-0.5) circle (0.5);

    % 循环画圆
    \foreach \y/\n in {0/5,1/4,2/3}
    {
        \foreach \x in {1,...,\n} {
            \draw (\x+0.5,-\y-0.5) circle (0.5);
        }
    }

    \node[below] at (3,-3.5) {$s(\lambda)=\sigma(\lambda)=3$};
\end{tikzpicture}
\;\;
\hspace{2cm} % 调整间距
\;\;
\begin{tikzpicture}[scale=0.5, baseline=(current bounding box.center)]
    % 右图
    % 黄色
    \fill[yellow] (1.5,-1.5) circle (0.5);
    \fill[yellow] (2.5,-1.5) circle (0.5);
    % 紫色
    \fill[mypurple] (3.5,-1.5) circle (0.5);
    % 红色
    \fill[myred] (4.5,-0.5) circle (0.5);

    % 循环画圆
    \foreach \y/\n in {0/4,1/3}
    {
        \foreach \x in {1,...,\n} {
            \draw (\x+0.5,-\y-0.5) circle (0.5);
        }
    }

    \node[below] at (2.5,-3.5) {$s(\lambda)=3,\ \sigma(\lambda)=2$};
\end{tikzpicture}
\]

\indent Up to this point,  we have established a one-to-one correspondence-based on their even parts-between partitions in $E_0(n)$ and $E_1(n)$ that do not fall into the pentagonal case. Therefore, the subsequent analysis need only focus on partitions whose even parts fall into the pentagonal cases.\\
\indent Applying Lemma \ref{lem1}, we let $c$ and $d$ be given. Set $n=d+t$, where $t$ is chosen such that $d-t=c$. For the even parts in the pentagonal number case, the total sum of the even parts is $m(3m \pm 1)$, where $m$ denotes the number of even parts. Consequently, the sum of the elements in the parallel bi-partitions corresponding to the two-colored partitions of the odd parts is
\[
n - m(3m \pm 1)
= d'+t',
\]
where
\[
d' = d - \tfrac{1}{2}m(3m \pm 1),
\quad
t' = t - \tfrac{1}{2}m(3m \pm 1)
\]
satisfying $d'-t'=c$.
Then the number of \emph{parallel bi-partition systems} corresponding to $(\alpha_{\text{odd}}, \beta_{\text{odd}})$ is
\[
p\!\left(d' - \tfrac{1}{2}c(c+1)\right)
= p\!\left(d - \tfrac{1}{2}c(c+1) - \tfrac{1}{2}m(3m \pm 1)\right).
\]
Subtracting the odd-part case from the even-part case yields the following expression:
\begin{align*}
& p(d - \tfrac{1}{2}c(c+1)) - p(d - \tfrac{1}{2}c(c+1)-1) - p(d - \tfrac{1}{2}c(c+1)-2) + \cdots \\[4pt]
&\; + (-1)^m\, p\!\left(d - \tfrac{1}{2}c(c+1) - \tfrac{1}{2}m(3m-1)\right) + (-1)^m\, p\!\left(d - \tfrac{1}{2}c(c+1) - \tfrac{1}{2}m(3m+1)\right) + \cdots.
\end{align*}
Using Lemma \ref{lem2}, this expression equals zero unless $d - \tfrac{1}{2}c(c+1) = 0$. In that case,  $n=d+t=c^2$ and $p(0)=1$.
By symmetry, if $d = \tfrac{1}{2}c(c-1)$, then $t = \tfrac{1}{2}c(c+1)$, so again $n=d+t=c^2$ and $p(0)=1$.    This completes the proof of  Theorem \ref{Q} (A).\\
\indent For the proof of Theorem \ref{Q} (B), we  proceed by applying the same method and conclusions as above.
First,  using Franklin involution \cite[Theorem 1.6]{A98}, we eliminate the general two-colored partitions, leaving only those partitions whose even parts fall into the pentagonal cases.
Then, by Lemma \ref{lem1}, subtracting the odd-part case from the even-part case yields the following expression:
 \begin{align*}
& (-1)^n \Bigg[ \, p\!\left(d - \tfrac{1}{2}c(c+1)\right) - p\!\left(d - \tfrac{1}{2}c(c+1)-1\right) - p\!\left(d - \tfrac{1}{2}c(c+1)-2\right) + \cdots \\[4pt]
&\quad + (-1)^m\, p\!\left(d - \tfrac{1}{2}c(c+1) - \tfrac{1}{2}m(3m-1)\right) + (-1)^m\, p\!\left(d - \tfrac{1}{2}c(c+1) - \tfrac{1}{2}m(3m+1)\right) + \cdots \, \Bigg].
\end{align*}
In this expression, the factor $(-1)^{m}$ accounts for the parity of the number of even parts, while the factor $(-1)^{n}$ (under the condition $n\equiv c\pmod2$) accounts for the parity of  the overall number of odd parts.
Finally,  following the same line of argument as above, we obtain the proof of Theorem \ref{Q} (B).

\end{proof}

\subsection*{Acknowledgements}
We appreciate Professor John Loxton for bringing Bugleev's paper to our attention. The first author was  supported by the National Key R\&D Program of China (Grant No. 2024YFA1014500) and the National Natural Science Foundation of China (Grant No. 12201387).

\end{document}